\documentclass[a4paper, 12pt]{article}
\pdfoutput=1

\usepackage[margin=25mm]{geometry}

\usepackage[utf8]{inputenc}
\usepackage[T1]{fontenc}
\usepackage[british]{babel}
\usepackage[style=british]{csquotes}
\usepackage[en-GB]{datetime2}
\DTMlangsetup[en-GB]{ord=omit}
\usepackage{xcolor}

\usepackage{amsthm, thmtools}
\usepackage{mathtools}
\usepackage{titlesec} 
\usepackage{hyperref}
\hypersetup{
    bookmarksnumbered,
    colorlinks,
    linkcolor={cyan!50!blue},
    citecolor={cyan!50!blue},
    urlcolor={cyan!50!blue}
}
\usepackage[capitalise,nameinlink]{cleveref}

\usepackage{tikz}
\usepackage{tikz-cd}

\usepackage[expansion=false,nopatch=footnote]{microtype}
\usepackage[lining,tabular,scale=.875]{FiraSans}
\usepackage[tt=false,semibold]{libertinus}
\usepackage[libertine,smallerops]{newtxmath}
\usepackage[lining,scaled=.8]{FiraMono}
\usepackage[cal=boondox,frak=euler]{mathalpha}

\DeclareFontEncoding{MDA}{}{}
\DeclareSymbolFont{mathdesignA}{MDA}{mdput}{m}{n}
\SetSymbolFont{mathdesignA}{bold}{MDA}{mdput}{b}{n}
\DeclareSymbolFontAlphabet{\mathbb}{mathdesignA}

\tikzcdset{
    arrow style=tikz,
    arrows={/tikz/line width=.5pt},
    diagrams={>={Straight Barb[scale=0.8]}}
}

\DeclareFontFamily{OMX}{MnSymbolE}{}
\DeclareSymbolFont{MnLargeSymbols}{OMX}{MnSymbolE}{m}{n}
\SetSymbolFont{MnLargeSymbols}{bold}{OMX}{MnSymbolE}{b}{n}
\DeclareFontShape{OMX}{MnSymbolE}{m}{n}{
    <-6>  MnSymbolE5
   <6-7>  MnSymbolE6
   <7-8>  MnSymbolE7
   <8-9>  MnSymbolE8
   <9-10> MnSymbolE9
  <10-12> MnSymbolE10
  <12->   MnSymbolE12
}{}
\DeclareFontShape{OMX}{MnSymbolE}{b}{n}{
    <-6>  MnSymbolE-Bold5
   <6-7>  MnSymbolE-Bold6
   <7-8>  MnSymbolE-Bold7
   <8-9>  MnSymbolE-Bold8
   <9-10> MnSymbolE-Bold9
  <10-12> MnSymbolE-Bold10
  <12->   MnSymbolE-Bold12
}{}
\DeclareMathDelimiter{[}{\mathopen}{MnLargeSymbols}{'000}{MnLargeSymbols}{'000}
\DeclareMathDelimiter{]}{\mathclose}{MnLargeSymbols}{'005}{MnLargeSymbols}{'005}
\DeclareMathDelimiter{\llbr}{\mathopen}{MnLargeSymbols}{'102}{MnLargeSymbols}{'102}
\DeclareMathDelimiter{\rrbr}{\mathclose}{MnLargeSymbols}{'107}{MnLargeSymbols}{'107}

\usepackage{bm}
\usepackage{cases}
\usepackage{accents}

\usepackage{setspace}
\setstretch{1.25}
\usepackage{needspace}
\usepackage[all]{nowidow}

\usepackage{subdepth} 

\newcommand{\initlengths}{%
    \setlength{\abovedisplayshortskip}{3pt plus 9pt minus 3pt}%
    \setlength{\belowdisplayshortskip}{9pt plus 9pt minus 9pt}%
    \setlength{\abovedisplayskip}{9pt plus 9pt minus 9pt}%
    \setlength{\belowdisplayskip}{9pt plus 9pt minus 9pt}%
    \hfuzz 1pt%
    \tolerance 400
}

\usepackage{titling}
\pretitle{\vspace{0pt}\setstretch{1.05}\begin{center}\LARGE\libertinusDisplay\fontdimen2\font=0.3em} 
\posttitle{\end{center}\vspace{6pt}}

\numberwithin{paragraph}{section}

\newcommand{\parasep}{9pt plus 3pt minus 3pt}
\setlength{\topsep}{9pt plus 3pt minus 3pt}
\setlength{\partopsep}{0pt}

\titleformat{\section}{\Large\libertinusDisplay}{\thesection}{1em}{}
\titleformat{\subsection}{\large\firamedium\boldmath}{\thesubsection}{1em}{}

\usepackage{tocloft}

\setlength{\cftaftertoctitleskip}{12pt}
\setlength{\cftbeforesecskip}{9pt}

\renewenvironment{abstract}{%
    \centering\begin{minipage}{.85\textwidth}%
    \setlength{\parindent}{1.5em}%
    \centerline{\firamedium\abstractname}%
    \par\vspace{12pt}%
}{\end{minipage}\par\vspace{9pt}}

\makeatletter
\newcommand*{\@parabookmark}{%
  \pdfbookmark[2]{%
    \theparagraph
    \ifx\@currentlabelname\@empty
    \else
      .\space\@currentlabelname%
    \fi
  }{\theparagraph}
}
\newcommand*{\@thmbookmark}{%
  \pdfbookmark[2]{%
    \theparagraph.\space\thmt@thmname
    \ifx\@currentlabelname\@empty
    \else
      .\space\@currentlabelname%
    \fi
  }{\theparagraph}
}
\newcommand*{\parabookmark}{\@parabookmark}
\newcommand*{\thmbookmark}{\@thmbookmark}

\newcommand*{\resumeparabookmarks}{%
  \renewcommand*{\parabookmark}{\@parabookmark}%
  \renewcommand*{\thmbookmark}{\@thmbookmark}%
}

\declaretheoremstyle[
    spaceabove=\parasep, spacebelow=\parasep,
    postheadspace=.5em,
    postheadhook=\thmbookmark,
    headfont=\normalfont\firamedium\firaproportional,
    headpunct={},
    headformat={\NUMBER.\@\ \NAME.\@\NOTE},
    notefont=\normalfont\firamedium\firaproportional\boldmath,
    notebraces={}{.},
    bodyfont=\itshape,
]{theorem}
\declaretheoremstyle[
    spaceabove=\parasep, spacebelow=\parasep,
    postheadspace=.5em,
    headfont=\normalfont\firamedium\firaproportional,
    headpunct={},
    headformat={\NAME.\@\NOTE},
    notefont=\normalfont\firamedium\firaproportional\boldmath,
    notebraces={}{.},
    bodyfont=\itshape,
]{theorem*}
\declaretheoremstyle[
    spaceabove=\parasep, spacebelow=\parasep,
    postheadspace=.5em,
    postheadhook=\thmbookmark,
    headfont=\normalfont\firamedium\firaproportional,
    headpunct={},
    headformat={\NUMBER.\@\ \NAME.\@\NOTE},
    notefont=\normalfont\firamedium\firaproportional\boldmath,
    notebraces={}{.},
]{definition}
\declaretheoremstyle[
    spaceabove=\parasep, spacebelow=\parasep,
    postheadspace=.5em,
    postheadhook=\parabookmark,
    headfont=\normalfont\firamedium\firaproportional,
    headpunct={},
    headformat={\NUMBER.\@\NOTE},
    notefont=\normalfont\firamedium\firaproportional\boldmath,
    notebraces={}{.},
]{para}

\renewenvironment{proof}[1][\proofname]{\par
    \pushQED{\qed}%
    \normalfont\trivlist
    \item[\hskip\labelsep\firamedium #1\@addpunct{.}]\ignorespaces
}{%
    \popQED\endtrivlist\@endpefalse
}

\expandafter\def\csname equation*@qed\endcsname{\equation@qed}
\makeatother

\declaretheorem[sibling=paragraph, style=para, refname={\S,\S\S}]{para}
\declaretheorem[sibling=paragraph, style=theorem, name=Theorem]{theorem}
\declaretheorem[sibling=paragraph, style=theorem, name=Lemma]{lemma}

\declaretheorem[numbered=no, style=theorem*, name=Theorem]{theorem*}
\declaretheorem[numbered=no, style=theorem*, name=Lemma]{lemma*}

\declaretheorem[sibling=paragraph, style=definition, name=Example]{example}

\declaretheorem[sibling=paragraph, style=definition, name=Remark]{remark}

\numberwithin{equation}{paragraph}

\crefdefaultlabelformat{#2{\upshape#1}#3}

\crefformat{equation}{#2{\upshape(#1)}#3}
\crefrangeformat{equation}{{\upshape#3(#1)#4--#5(#2)#6}}
\crefmultiformat{equation}{#2{\upshape(#1)}#3}{ and #2{\upshape(#1)}#3}{, #2{\upshape(#1)}#3}{, and~#2{\upshape(#1)}#3}

\crefformat{enumi}{#2{\upshape#1}#3}
\crefrangeformat{enumi}{{\upshape#3#1#4--#5#2#6}}
\crefmultiformat{enumi}{#2{\upshape#1}#3}{ and #2{\upshape#1}#3}{, #2{\upshape#1}#3}{, and~#2{\upshape#1}#3}

\crefformat{section}{#2{\upshape\S#1}#3}
\crefrangeformat{section}{{\upshape#3\S\S#1#4--#5#2#6}}
\crefmultiformat{section}{{\upshape#2\S\S#1#3}}{ and {\upshape#2#1#3}}{, {\upshape#2#1#3}}{, and~{\upshape#2#1#3}}
\crefformat{subsection}{#2{\upshape\S#1}#3}
\crefrangeformat{subsection}{{\upshape#3\S\S#1#4--#5#2#6}}
\crefmultiformat{subsection}{{\upshape#2\S\S#1#3}}{ and {\upshape#2#1#3}}{, {\upshape#2#1#3}}{, and~{\upshape#2#1#3}}
\crefformat{para}{#2{\upshape\S#1}#3}
\crefrangeformat{para}{{\upshape#3\S\S#1#4--#5#2#6}}
\crefmultiformat{para}{{\upshape#2\S\S#1#3}}{ and {\upshape#2#1#3}}{, {\upshape#2#1#3}}{, and~{\upshape#2#1#3}}

\crefname{figure}{Figure}{Figures}

\usepackage{enumitem}
\setlist{noitemsep}
\setlist[enumerate]{label=\textnormal{(\roman*)}}

\usepackage[labelsep=period, labelfont={bf}]{caption}

\usepackage[bottom, hang, multiple]{footmisc}
\setlength{\footnotemargin}{1em}
\setlength{\footnotesep}{10pt}

\usepackage{etoolbox}
\makeatletter
\patchcmd{\@makefntext}{\ifFN@hangfoot\bgroup}%
{\ifFN@hangfoot\bgroup\def\@makefnmark{\rlap{\firamedium\firaproportional\@thefnmark}}}{}{}%
\makeatother

\newcommand{\preparebibliography}{
    \phantomsection
    \addcontentsline{toc}{section}{References}
    \sloppy
    \setstretch{1.1}
    \renewcommand*{\bibfont}{\normalfont\small}
}

\usepackage[
    backend=biber,
    style=oxnum,
    giveninits,
    maxbibnames=5,
    maxcitenames=5,
    sorting=nyt
]{biblatex}

\DeclareFieldFormat*{title}{\textit{#1}}
\DeclareFieldFormat*{journaltitle}{#1}
\DeclareFieldFormat[article]{version}{\IfDecimal{#1}{version~}{}#1}
\DefineBibliographyStrings{english}{
  withafterword = {with an appendix by},
}

\DeclareFieldFormat{pages}{%
  \iffieldundef{bookpagination}%
    {#1}%
    {\mkpageprefix[bookpagination]{#1}}%
}

\newcommand{\authorinforule}{\noindent\rule{0.38\textwidth}{0.4pt}}

\newlength{\authorwidth}
\setlength{\authorwidth}{9em}
\newcommand{\authorinfo}[3]{%
    \setlength{\leftskip}{1.5em}
    \setlength{\parindent}{0em}
    \setstretch{1}
    \par%
    {\small%
    \makebox[\authorwidth][l]{#1}%
    \texttt{#2}%
    \\
    #3.}
    \vspace{6pt}\par
}

\hyphenation{ortho-symplectic}
\hyphenation{semi-stable}
\hyphenation{semi-sta-bility}

\newcommand{\calHom}{{\mathcal{H}\mspace{-5mu}\mathit{om}}}
\newcommand{\calMap}{{\mathcal{M}\mspace{-3mu}\mathit{ap}\mspace{1mu}}}

\newcommand{\git}{{/\mspace{-5mu}/}}


\newcommand{\longsimto}{\mathrel{\overset{\smash{\raisebox{-.8ex}{$\sim$}}\mspace{3mu}}{\longrightarrow}}}

\newcommand{\simto}{\mathrel{\mathchoice
    {\overset{\smash{\raisebox{-.8ex}{$\sim$}}\mspace{3mu}}{\to}}
    {\overset{\smash{\raisebox{-.8ex}{$\sim$}}\mspace{3mu}}{\to}}
    {\overset{\smash{\raisebox{-.6ex}{$\scriptstyle\sim$}}\mspace{3mu}}{\to}}
    {\overset{\smash{\raisebox{-.6ex}{$\scriptscriptstyle\sim$}}\mspace{3mu}}{\to}}
}}



\renewcommand{\geq}{\geqslant}

\addbibresource{main.bib}

\title{Proper moduli spaces of orthosymplectic complexes}
\author{Chenjing Bu}
\date{}

\begin{document}

\initlengths

\maketitle

\begin{abstract}
    We construct proper good moduli spaces
for moduli stacks of Bridgeland semistable orthosymplectic complexes
on a complex smooth projective variety,
which we propose as a candidate for
compactifying moduli spaces of principal bundles
for the orthogonal and symplectic groups.
We also prove some results
on good moduli spaces of fixed point stacks and
mapping stacks from finite groupoids.

\end{abstract}

\vspace{12pt}

\section{Introduction}

\begin{para}
    Moduli spaces of coherent sheaves on a smooth projective variety
    are natural compactifications for moduli spaces of vector bundles,
    enabling the study of many flavours of sheaf-counting theories,
    which form a major branch of enumerative geometry.

    However, less is known about compactifications
    of moduli spaces of principal $G$-bundles on a complex smooth projective variety,
    for an arbitrary reductive group~$G$.
    This is not an issue for curves,
    as moduli spaces of semistable principal bundles are already compact,
    as shown by \textcite{ramanathan-1996-i,ramanathan-1996-ii}.
    But in dimension~$\geq$~$2$,
    one needs a good notion of singular principal bundles
    to obtain compact moduli spaces.

    An existing approach is that of \emph{principal $\rho$-sheaves},
    as in \textcite{gomez-sols-2005}
    and \textcite{gomez-herrero-zamora-2024},
    depending on a choice of a representation~$\rho$ of~$G$.

    In this work, we propose a different compactification
    when $G = \mathrm{O}_n$ or $\mathrm{Sp}_{2n}$,
    using \emph{orthosymplectic complexes},
    which are complexes of coherent sheaves
    that are isomorphic to their own derived duals.
    The enumerative geometry of such complexes has been
    studied by the author \cite{bu-osp-dt}
    in the context of motivic Donaldson--Thomas theory,
    where they seem to be more convenient than principal $\rho$-sheaves.
\end{para}

\begin{para}
    Our main result, \cref{thm-osp},
    states that for a smooth projective $\mathbb{C}$-variety~$X$
    and a Bridgeland stability condition~$\tau$ on~$X$ satisfying certain assumptions,
    the moduli stack of $\tau$-semistable orthosymplectic complexes on~$X$
    with a fixed Chern character has a proper good moduli space
    in the sense of \textcite{alper-2013}.

    This is a consequence of a more general result, \cref{thm-proper},
    which implies that if a stack has a proper good moduli space,
    then the same holds for its fixed loci of finite group actions
    and mapping stacks from finite groupoids into it.

    Since the moduli stack of semistable orthosymplectic complexes
    is a $\mathbb{Z} / 2$-fixed locus in the moduli stack
    of semistable complexes,
    and the latter admits proper good moduli spaces by results in
    \cite{alper-halpern-leistner-heinloth-2023-existence},
    \cref{thm-osp} follows.
\end{para}

\begin{para}
    One case that might be of particular interest is when~$X$
    is a K3 surface or an abelian surface.
    In this case, the moduli stack of orthosymplectic complexes
    admits a derived enhancement with a $0$-shifted symplectic structure in the sense of
    \textcite{pantev-toen-vaquie-vezzosi-2013}
    (see \cite[\S 6.2]{bu-osp-dt}),
    and it follows that the proper good moduli space
    admits a Poisson structure
    which restricts to a symplectic structure on the stable locus.

    It would be interesting to determine if these moduli spaces have
    symplectic singularities in the sense of \textcite{beauville-2000}
    (see \textcite{perego-rapagnetta-2023} for the case of sheaves),
    and if they admit symplectic resolutions,
    which, in the case of sheaves,
    are constructed in special cases by \textcite{ogrady-1999}
    and shown to not exist in other cases by
    \textcite{kaledin-lehn-sorger-2006-singular}.
    We expect that our moduli spaces are locally modelled on
    symplectic reductions of moduli of representations of \emph{self-dual quivers}
    in the sense of \textcite{derksen-weyman-2002}
    and \cite[\S 6.1]{bu-osp-dt},
    and that an analysis similar to \cite{kaledin-lehn-sorger-2006-singular}
    would help with answering these questions.
\end{para}

\begin{para}[Acknowledgements]
    The author thanks Fei Peng for
    suggesting a simpler proof of \cref{thm-proper},
    and Dominic Joyce,
    Young-Hoon Kiem,
    Tasuki Kinjo,
    and Naoki Koseki
    for helpful discussions and comments.
    The author also thanks the anonymous referees
    for their useful comments.

    This work was supported by EPSRC grant reference EP/X040674/1.

\end{para}

\section{Fixed loci and mapping stacks}

In this section, we provide background material and preparatory results
on fixed loci of finite group actions on stacks,
and on mapping stacks from finite groupoids into stacks.

For background on algebraic stacks, see \textcite{olsson-2016} or \textcite{stacks-project}.

\begin{para}[Fixed loci]
    Let~$S$ be an algebraic space,
    and let~$\mathcal{X}$ be an algebraic stack over~$S$.

    If a finite group~$\Gamma$ acts on~$\mathcal{X}$ over~$S$,
    we define its \emph{fixed locus} $\mathcal{X}^\Gamma$
    as the homotopy limit
    \begin{equation*}
        \mathcal{X}^\Gamma =
        \lim_{\mathrm{B} \Gamma} \mathcal{X}
    \end{equation*}
    of the corresponding functor from $\mathrm{B} \Gamma$
    to the $2$-category of algebraic stacks over~$S$.

    More explicitly, it is defined by the functor of points
    $\mathcal{X}^\Gamma (T) = \mathcal{X} (T)^{\Gamma}$,
    where~$T$ is an affine scheme with a morphism $T \to S$,
    and $\mathcal{X} (T)^{\Gamma}$ is the homotopy fixed locus
    of the $\Gamma$-action on the groupoid~$\mathcal{X} (T)$,
    which is the groupoid of pairs $(x, \varphi)$,
    where $x \in \mathcal{X} (T)$,
    and $\varphi = (\varphi_g)_{g \in \Gamma}$
    is a family of isomorphisms
    $\varphi_g \colon g x \simto x$ in $\mathcal{X} (T)$
    satisfying $\varphi_1 = \mathrm{id}_x$ and the cocycle condition
    $\varphi_{gh} = \varphi_g \circ g \varphi_h$ for all $g, h \in \Gamma$.

    See also \textcite{romagny-2005} for related discussions.
\end{para}

\begin{para}[Mapping stacks]
    Let~$\mathcal{X}$ be an algebraic stack over an algebraic space~$S$,
    and let~$\Gamma$ be a finite group.
    The \emph{mapping stack}
    \begin{equation*}
        \calMap (\mathrm{B} \Gamma, \mathcal{X})
    \end{equation*}
    is defined by the functor of points
    $\calMap (\mathrm{B} \Gamma, \mathcal{X}) (T) =
    \mathrm{Map} (\mathrm{B} \Gamma, \mathcal{X} (T))$
    for an affine scheme~$T$ with a morphism $T \to S$,
    where the right-hand side is the mapping groupoid,
    or more explicitly, the groupoid of pairs $(x, \rho)$
    with $x \in \mathcal{X} (T)$
    and $\rho \colon \Gamma \to \mathrm{Aut} (x)$ a group homomorphism.

    In fact, we have
    \begin{equation*}
        \calMap (\mathrm{B} \Gamma, \mathcal{X}) \simeq \mathcal{X}^\Gamma
    \end{equation*}
    for the trivial $\Gamma$-action on~$\mathcal{X}$.
    This is because on the functor of points, the mapping groupoid
    $\mathrm{Map} (-, -)$
    takes colimits to limits in the first component.

    For example, for a finite group~$\Gamma$
    and a flat group algebraic space~$G$
    locally of finite presentation over~$S$,
    we have
    $\calMap (\mathrm{B} \Gamma, \mathrm{B} G) \simeq
    \mathrm{Hom} (\Gamma, G) / G$,
    where $\mathrm{Hom} (\Gamma, G)$
    is the algebraic space of group homomorphisms
    $\rho \colon \Gamma \to G$,
    as a subspace in $G^{|\Gamma|}$ cut out by the relations
    $\rho (1) = 1$ and $\rho (g h) = \rho (g) \rho (h)$ for all $g, h \in \Gamma$,
    and~$G$ acts on $\mathrm{Hom} (\Gamma, G)$ by conjugation.
    This can be checked on the functor of points,
    since a $\Gamma$-action on a $G$-torsor $Y \to T$
    is the same as a homomorphism
    $\Gamma \to \mathrm{Aut} (Y) \simeq \mathrm{Hom}^G (Y, G)$,
    the group of $G$-equivariant maps from~$Y$ to~$G$,
    where~$G$ acts on~$G$ by conjugation,
    which is the same as a $G$-equivariant map
    $Y \to \mathrm{Hom} (\Gamma, G)$.
\end{para}

\begin{para}[Fixed loci and mapping stacks]
    \label{para-fix-map}
    We explain a relation between fixed loci and mapping stacks introduced above.

    Let~$\Gamma$ be a finite group acting on~$\mathcal{X}$ over~$S$.
    The fixed locus~$\mathcal{X}^\Gamma$
    is equivalent to the stack of sections of the morphism
    $\mathcal{X} / \Gamma \to \mathrm{B} \Gamma$,
    that is, we have
    \begin{equation}
        \label{eq-fix-map}
        \mathcal{X}^\Gamma \simeq
        \calMap (\mathrm{B} \Gamma, \mathcal{X} / \Gamma)
        \underset{\calMap (\mathrm{B} \Gamma, \, \mathrm{B} \Gamma)}{\times}
        \{ \mathrm{id} \} \ .
    \end{equation}
    This follows from taking $\Gamma$-fixed points on both sides of
    $\mathcal{X} \simeq (\mathcal{X} / \Gamma) \times_{\mathrm{B} \Gamma} \mathrm{pt}$,
    where $\Gamma$ acts trivially on
    $\mathcal{X} / \Gamma$ and $\mathrm{B} \Gamma$,
    tautologically on the morphism
    $\mathrm{pt} \to \mathrm{B} \Gamma$,
    and trivially on $\mathcal{X} / \Gamma \to \mathrm{B} \Gamma$,
    inducing the given action on the fibre product~$\mathcal{X}$.

    The image of $\{ \mathrm{id} \} \to \calMap (\mathrm{B} \Gamma, \mathrm{B} \Gamma)$
    is isomorphic to $\mathrm{B} \mathrm{Z} (\Gamma)$,
    where $\mathrm{Z} (\Gamma)$ is the centre of~$\Gamma$.
    Thus, if we write
    $\calMap' (\mathrm{B} \Gamma, \mathcal{X} / \Gamma) \subset
    \calMap (\mathrm{B} \Gamma, \mathcal{X} / \Gamma)$
    for the open and closed substack which is
    the preimage of this copy of $\mathrm{B} \mathrm{Z} (\Gamma)$,
    then
    \begin{equation}
        \label{eq-map-prime}
        \calMap' (\mathrm{B} \Gamma, \mathcal{X} / \Gamma) \simeq
        \mathcal{X}^\Gamma \times \mathrm{B} \mathrm{Z} (\Gamma) \ ,
    \end{equation}
    since by \cref{eq-fix-map},
    the left-hand side is a quotient
    $\mathcal{X}^\Gamma / \mathrm{Z} (\Gamma)$,
    and the action is trivial, which can be seen as follows.
    It is induced by the $(\Gamma \times \mathrm{Z} (\Gamma))$-action on~$\mathcal{X}$
    via the homomorphism
    $\Gamma \times \mathrm{Z} (\Gamma) \to \Gamma$,
    $(g, z) \mapsto g z$,
    which also gives a $\mathrm{Z} (\Gamma)$-action on the
    $\mathrm{B} \Gamma$-shaped diagram encoding the $\Gamma$-action on~$\mathcal{X}$.
    This action on the diagram is compatible with
    the trivial $\mathrm{Z} (\Gamma)$-action on~$\mathcal{X}^\Gamma$,
    in that they extend to a $\mathrm{Z} (\Gamma)$-action
    on the diagram witnessing the limit.
    This compatibility implies that
    the trivial $\mathrm{Z} (\Gamma)$-action on~$\mathcal{X}^\Gamma$
    agrees with the induced action.
\end{para}

\begin{example}
    \label{eg-bgl}
    We present a simple example demonstrating the above relations,
    which also illustrates how higher coherence is involved
    in group actions and fixed loci on stacks.

    Let $S = \operatorname{Spec} \mathbb{C}$.
    Let $\Gamma = \mathbb{Z} / 2$ act on $\mathcal{X} = \mathrm{BGL}_n$
    by taking the inverse transpose matrix,
    and use the conjugation by $\varepsilon \in \{ \pm 1 \} \subset \mathrm{GL}_n$
    to witness the equality of the double inverse transpose of a matrix with itself.
    In other words, regarding $\mathrm{BGL}_n$ as classifying rank~$n$ vector bundles,
    the action is given by taking the dual bundle,
    and we identify the double dual with the original bundle
    using $\varepsilon$~times the usual isomorphism.
    See \cite[Example~2.2.6]{bu-osp-dt} for more on this example.

    The fixed locus of this action is
    \begin{equation*}
        \mathrm{BGL}_n^{\smash{\mathbb{Z} / 2}} \simeq
        \begin{cases}
            \mathrm{BO}_n & \text{if $\varepsilon = 1$,}
            \\[-.5ex]
            \mathrm{BSp}_n & \text{if $\varepsilon = -1$ and $n$ is even,}
            \\[-.5ex]
            \varnothing & \text{if $\varepsilon = -1$ and $n$ is odd.}
        \end{cases}
    \end{equation*}
    On the other hand, we have
    $\mathrm{BGL}_n / (\mathbb{Z} / 2) \simeq \mathrm{B} G$,
    where $\mathrm{GL}_n \hookrightarrow G \twoheadrightarrow \mathbb{Z} / 2$
    is a group extension defined as follows:
    $G$ is generated by $\mathrm{GL}_n \subset G$
    and a generator $j \in G$,
    with the relations $j^2 = \varepsilon$
    and $j g = g^{-\mathrm{t}} j$ for $g \in \mathrm{GL}_n$,
    where $g^{-\mathrm{t}}$ is the inverse transpose.
    Thus, we have
    \begin{equation*}
        \calMap' (\mathrm{B} (\mathbb{Z} / 2), \mathrm{B} G) \simeq H / G \ ,
    \end{equation*}
    where $\calMap'$ is as in \cref{eq-map-prime},
    and $H \subset G \setminus \mathrm{GL}_n$
    is the closed subscheme of elements of order~$2$.
    Since $(j g)^2 = 1$ if and only if $g = \varepsilon g^{\mathrm{t}}$
    for $g \in \mathrm{GL}_n$,
    $H$ is isomorphic to the space of
    \mbox{(anti-)}\allowbreak symmetric invertible matrices when
    $\varepsilon = 1$ (resp.\ $-1$).
    Under this identification,
    the $G$-action on~$H$ is given by
    $g \cdot h = g^{\mathrm{t}} h g$
    and $jg \cdot h = g^{\mathrm{t}} h^{-\mathrm{t}} g$
    for $g \in \mathrm{GL}_n$ and an \mbox{(anti-)}\allowbreak symmetric matrix~$h$.

    When $\varepsilon = 1$,
    $H$ consists of a single $G$-orbit represented by the identity matrix $h = 1$,
    and the stabilizer consists of $g, jg \in G$ for $g \in \mathrm{O}_n$.
    As $j g = g j$ for such~$g$, we have
    $H / G \simeq \mathrm{B} (\mathrm{O}_n \times \mathbb{Z} / 2)$,
    consistent with the prediction in~\cref{para-fix-map} that
    $H / G \simeq \mathrm{BGL}_n^{\smash{\mathbb{Z} / 2}} \times \mathrm{B} (\mathbb{Z} / 2)$.

    When $\varepsilon = -1$,
    $H$ is empty when $n$ is odd.
    When $n$ is even, it again consists of a single $G$-orbit,
    represented by the matrix
    $h = J = \bigl( \begin{smallmatrix} 0 & -1 \\ 1 & 0 \end{smallmatrix} \bigr)$
    of the standard symplectic form.
    The stabilizer consists of $g, jg \in G$ for $g \in \mathrm{Sp}_n$,
    and is isomorphic to $\mathrm{Sp}_n \times \mathbb{Z} / 2$,
    with $jJ$ corresponding to the generator of $\mathbb{Z} / 2$.
    Again, $H / G \simeq \mathrm{B} (\mathrm{Sp}_n \times \mathbb{Z} / 2)
    \simeq \mathrm{BGL}_n^{\smash{\mathbb{Z} / 2}} \times \mathrm{B} (\mathbb{Z} / 2)$.
\end{example}

\begin{theorem}
    \label{thm-map}
    Let~$S$ be an algebraic space,
    and let~$\mathcal{X}$ be an algebraic stack over~$S$
    acted on by a finite group~$\Gamma$.
    Consider the forgetful morphism
    \begin{equation*}
        p \colon \mathcal{X}^\Gamma \longrightarrow \mathcal{X} \ .
    \end{equation*}
    Then $p$~is representable by algebraic spaces and locally of finite type,
    and $\mathcal{X}^\Gamma$ is an algebraic stack.
    Moreover, if\/~$\mathcal{X}$ is quasi-separated,
    then~$p$ is quasi-compact and quasi-separated;
    if\/~$\mathcal{X}$ has affine diagonal over~$S$,
    then~$p$ is affine.

    In particular, the same is true for the mapping stack
    $\calMap (\mathrm{B} \Gamma, \mathcal{X})$
    and the forgetful morphism
    \begin{equation*}
        \calMap (\mathrm{B} \Gamma, \mathcal{X}) \longrightarrow \mathcal{X} \ .
    \end{equation*}
\end{theorem}

\begin{proof}
    The properties of~$p$ follow from
    \textcite[Proposition~3.7]{romagny-2005}.
    Although it is assumed there (following \cite{laumon-moret-bailly-2000})
    that the diagonal of~$\mathcal{X}$ is separated and quasi-compact,
    the proof applies in general,
    except that without the diagonal of~$\mathcal{X}$ being separated or quasi-compact,
    it is not guaranteed that~$p$ has the same properties.
    The algebraicity of~$\mathcal{X}^\Gamma$ follows from the representability of~$p$.
    Note that the diagonal of~$\mathcal{X}$ is always
    representable by algebraic spaces and locally of finite type
    (see \cite[Tag~\href{https://stacks.math.columbia.edu/tag/04XS}{\texttt{04XS}}]{stacks-project}).

    The final statement follows from
    considering the trivial $\Gamma$-action on~$\mathcal{X}$.
\end{proof}

\section{Good moduli spaces}

This section is dedicated to proving the following result.
See \textcite{alper-2013} for background on good moduli spaces.

\begin{theorem}
    \label{thm-proper}
    Let~$S$ be a noetherian algebraic space of characteristic zero,
    and let~$\mathcal{X}$ be an algebraic stack locally of finite type with affine diagonal over~$S$,
    with a good moduli space $\mathcal{X} \to X$.

    Then for any finite group~$\Gamma$,
    the stack $\calMap (\mathrm{B} \Gamma, \mathcal{X})$
    has a good moduli space which is finite over~$X$.
    Moreover, if\/~$X$ is separated,
    then for any\/~$\Gamma$-action on~$\mathcal{X}$ over~$S$,
    the fixed locus $\mathcal{X}^\Gamma$ also has a good moduli space
    which is finite over~$X$.
\end{theorem}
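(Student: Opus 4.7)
The plan is to apply the main existence theorems of \textcite{alper-halpern-leistner-heinloth-2023-existence} (henceforth AHLH) to the stack $\calMap (\mathrm{B} \Gamma, \mathcal{X})$, and then to deduce the fixed-locus statement from \cref{eq-map-prime}.

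By \cref{thm-map}, the forgetful morphism $p \colon \calMap (\mathrm{B} \Gamma, \mathcal{X}) \to \mathcal{X}$ is affine, so $\calMap (\mathrm{B} \Gamma, \mathcal{X})$ is of finite type with affine diagonal over~$S$, fitting the hypotheses of the AHLH existence criterion. Since $\mathcal{X}$ admits a good moduli space, AHLH tells us it is both $\Theta$-reductive and S-complete. Both conditions are preserved under affine morphisms: an affine family over $\Theta_R$ or $\overline{\mathrm{ST}}_R$ is the relative $\mathrm{Spec}$ of a graded $R$-algebra, and any section over the punctured base extends uniquely to the missing origin. Transferring these conditions along $p$ and invoking AHLH produces a separated good moduli space~$Y$ for $\calMap (\mathrm{B} \Gamma, \mathcal{X})$.

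To conclude that $Y$ is proper when $X$ is, I would verify the existence part of the valuative criterion for $\calMap (\mathrm{B} \Gamma, \mathcal{X})$, which is the remaining AHLH ingredient for properness of the good moduli space. Unwinding the universal property, a $K$-point is a morphism $\mathrm{B} \Gamma_K \to \mathcal{X}$, and its image under~$p$ extends, after a finite extension of~$K$, to an $R$-point of~$\mathcal{X}$ by properness of~$X$. The main obstacle will be to promote this to a $\Gamma$-equivariant extension. I plan to do this by first refining the $R$-extension so that its special point has linearly reductive automorphism group~$G$ (via the semistable reduction refinement of AHLH), and then using the rigidity of $\Gamma$-representations into a reductive group scheme over a henselian DVR in characteristic zero to extend the generic homomorphism $\Gamma \to G_K$ to an $R$-family compatibly with the extension of the underlying point.

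Finally, the fixed-locus statement follows from \cref{eq-map-prime}, which identifies $\mathcal{X}^\Gamma \times \mathrm{B} \mathrm{Z} (\Gamma)$ with an open and closed substack of $\calMap (\mathrm{B} \Gamma, \mathcal{X} / \Gamma)$. Since $\Gamma$ is finite and hence linearly reductive in characteristic zero, the quotient $\mathcal{X} / \Gamma$ inherits the hypotheses of the theorem with good moduli space $X / \Gamma$, which is proper when $X$ is. Applying the mapping-stack case to $\mathcal{X} / \Gamma$, restricting to this open and closed substack, and projecting away the factor $\mathrm{B} \mathrm{Z} (\Gamma)$ (whose good moduli space is the base~$S$) yields the asserted good moduli space for $\mathcal{X}^\Gamma$.
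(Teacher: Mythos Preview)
Your overall architecture matches the paper's: transfer $\Theta$-reductivity and S-completeness along the affine forgetful morphism~$p$ to get existence, then verify the existence part of the valuative criterion for properness, and finally deduce the fixed-locus case via $\calMap(\mathrm{B}\Gamma,\mathcal{X}/\Gamma)$ and \cref{eq-map-prime}. The paper cites exactly the AHLH results you invoke for the first and third of these.

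The genuine gap is in your properness step. You phrase the obstruction as extending ``$\Gamma\to G_K$'' where~$G$ is the stabilizer at the \emph{special} point, but the datum you actually possess is $\rho\colon\Gamma\to\mathrm{Aut}(y_K)$, and the group scheme $\mathrm{Aut}(y)$ over~$R$ is in general neither flat nor reductive, so there is no reductive group scheme over the DVR into which~$\rho$ lands. Even after passing étale-locally to a chart $[W/G]$ with~$G$ reductive and constant over~$R$, rigidity only lets you conjugate~$\rho$ into $G(R)$; it does not produce an $R$-point of~$W$ that is simultaneously $\rho(\Gamma)$-fixed and restricts to the given $K$-point. That compatibility is precisely the clause ``compatibly with the extension of the underlying point'' that you flagged but did not address, and it is the substantive difficulty. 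The paper isolates this as a separate lemma (\cref{lem-ahlh-fin-gp}) and proves it by modifying the GIT argument behind \cite[Theorem~5.3~(1)]{alper-halpern-leistner-heinloth-2023-existence}: one replaces the stabilizer scheme $\mathrm{Stab}_{\mathrm{GL}_N}(X)$ by $\mathrm{Stab}^\Gamma_{\mathrm{GL}_N}(X)\subset X\times\mathrm{GL}_N^\Gamma$ parametrizing points together with a homomorphism $\Gamma\to\mathrm{GL}_N$ into the stabilizer, checks that its semistable locus is $\mathrm{Stab}^\Gamma_{\mathrm{GL}_N}(U)$, and uses properness of the resulting GIT quotient over $U\git\mathrm{GL}_N\times\mathrm{GL}_N^\Gamma\git\mathrm{GL}_N$ to modify the $R$-point inside this scheme. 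Your rigidity input (any $\Gamma\to\mathrm{GL}_N(K)$ is conjugate into $\mathrm{GL}_N(R)$) does appear there, but only as a preliminary lemma; the GIT modification is what supplies the compatible $\Gamma$-equivariant extension.
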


\begin{proof}
    We first prove the statement about
    $\calMap (\mathrm{B} \Gamma, \mathcal{X})$.

    By \cref{thm-map}, the morphism
    $\calMap (\mathrm{B} \Gamma, \mathcal{X}) \to \mathcal{X}$
    is affine and of finite type, and in particular,
    $\calMap (\mathrm{B} \Gamma, \mathcal{X})$ has affine diagonal over~$S$.
    By \textcite[Lemma~4.14]{alper-2013},
    $\calMap (\mathrm{B} \Gamma, \mathcal{X})$
    has a good moduli space~$Y$ which is affine and of finite type over~$X$,
    and it is enough to show that~$Y$ is also universally closed over~$X$.
    By \cite[Theorem~4.16~(i)]{alper-2013},
    $\calMap (\mathrm{B} \Gamma, \mathcal{X}) \to Y$
    is surjective, and it is enough to show that
    $\calMap (\mathrm{B} \Gamma, \mathcal{X}) \to X$
    is universally closed.

    Since $\mathcal{X} \to X$ is universally closed
    by \cite[Theorem~4.16~(ii)]{alper-2013},
    it is enough to prove the following statement:%
    \footnote{
        See
        \cite[Tag~\href{https://stacks.math.columbia.edu/tag/0CLV}{\texttt{0CLV}}]{stacks-project}
        for the valuative criterion for universal closedness for stacks.
    }
    Let $R$ be a discrete valuation ring with fraction field~$K$,
    let $x \in \mathcal{X} (R)$ be an $R$-point,
    and let $y \in \calMap (\mathrm{B} \Gamma, \mathcal{X}) (K)$
    be a $K$-point whose image in~$\mathcal{X}$ is $x_K$.
    Then there is an extension of discrete valuation rings $R \to R'$,
    such that $y_{K'}$ (where $K' = \mathrm{Frac} (R')$)
    extends to an $R'$-point
    $y' \in \calMap (\mathrm{B} \Gamma, \mathcal{X}) (R')$
    lying over the same $R'$-point of~$X$ as~$x_{R'}$.

    The groupoid
    $\calMap (\mathrm{B} \Gamma, \mathcal{X}) (K)$
    is equivalent to the groupoid of pairs $(z, \rho)$,
    where $z \in \mathcal{X} (K)$ and
    $\rho \colon \Gamma \to \mathrm{Aut} (z)$ is a group homomorphism.
    Fix $(z, \rho)$ to be the pair attached to~$y$,
    so $z = x_K$.
    Applying \cref{lem-ahlh-fin-gp},
    we see that there exists an extension $R \to R'$
    and an $R'$-point $x' \in \mathcal{X} (R')$
    with $x'_{\smash{K'}} = z$,
    such that~$\rho$ lifts to a homomorphism
    $\rho' \colon \Gamma \to \mathrm{Aut} (x')$.
    The pair $(x', \rho')$ now defines an $R'$-point
    $y' \in \calMap (\mathrm{B} \Gamma, \mathcal{X}) (R')$
    extending~$y_{\smash{K'}}$, as desired.

    Finally, for the last statement,
    we apply the above to the quotient stack~$\mathcal{X} / \Gamma$.
    Since~$X$ is separated,
    the quotient stack $X / \Gamma$ has finite inertia,
    and has a good moduli space $X \git \Gamma$.
    It is also the good moduli space of~$\mathcal{X} / \Gamma$, as
    $\mathcal{X} / \Gamma \to X / \Gamma \to X \git \Gamma$
    is a composition of good moduli space morphisms in the sense of
    \cite[Remark~4.4]{alper-2013}.
    The previous part applied to
    $\calMap (\mathrm{B} \Gamma, \mathcal{X} / \Gamma)$
    now gives a good moduli space of
    $\mathcal{X}^\Gamma \times \mathrm{B} \mathrm{Z} (\Gamma)$,
    which is also a good moduli space of~$\mathcal{X}^\Gamma$,
    and it is finite over~$X \git \Gamma$,
    hence finite over~$X$.
\end{proof}

\begin{remark}
    In the last part of \cref{thm-proper},
    the assumption that~$X$ is separated cannot be dropped.
    For example, if
    $\mathcal{X} = X =
    \mathbb{A}^1 \cup_{\smash{\mathbb{A}^1 \setminus \{ 0 \}}} \mathbb{A}^1$
    is the line with two origins,
    and $\Gamma = \mathbb{Z} / 2$ acts by swapping the two origins,
    then the fixed locus $\mathbb{A}^1 \setminus \{ 0 \}$
    is not finite over~$X$.
    In this case, $X / \Gamma$
    does not have a good moduli space,
    so the first part of the theorem does not apply to it.
\end{remark}

\begin{lemma}
    \label{lem-ahlh-fin-gp}
    Let~$S$ be a noetherian algebraic space of characteristic zero,
    and let~$\mathcal{X}$ be an algebraic stack
    locally of finite type with affine diagonal over~$S$,
    with a good moduli space $\mathcal{X} \to X$.

    Then given a discrete valuation ring~$R$,
    an $R$-point $x \in \mathcal{X} (R)$,
    a finite group~$\Gamma$,
    and a group homomorphism~$\rho \colon \Gamma \to \mathrm{Aut} (x_K)$,
    where $K = \mathrm{Frac} (R)$,
    there exists an extension of discrete valuation rings $R \to R'$
    and an $R'$-point $x' \in \mathcal{X} (R')$
    lying over the same $R'$-point of\/~$X$ as~$x_{\smash{R'}}$,
    with $x'_{\smash{K'}} \simeq x_{\smash{K'}}$,
    where $K' = \mathrm{Frac} (R')$,
    such that
    $\rho_{\smash{K'}} \colon \Gamma \to \mathrm{Aut} (x_{\smash{K'}})$
    extends to a $\Gamma$-action on~$x'$.
\end{lemma}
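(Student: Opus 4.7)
The plan is to reduce to a local presentation of~$\mathcal{X}$ as a quotient stack via the Alper--Hall--Rydh local structure theorem, and then to conjugate $\rho$ into the integral points of the structure group via a Bruhat--Tits-style argument valid in characteristic zero.

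First, using the $\Theta$-reductivity of $\mathcal{X}$ (which follows from the existence of a good moduli space, via \cite{alper-halpern-leistner-heinloth-2023-existence}), I would replace $x$ by an $R$-point whose special fibre is closed in the fibre of the good moduli space; at such a closed point the stabiliser is linearly reductive. The local structure theorem then gives an étale neighbourhood on which $\mathcal{X} \simeq [\mathrm{Spec}\,A / G_0]$ with $G_0$ linearly reductive, and after a finite étale extension of $R$ trivialising the underlying $G_0$-torsor, the point $x$ becomes a morphism $\mathrm{Spec}\,R \to \mathrm{Spec}\,A$ and $\rho$ becomes a homomorphism $\Gamma \to G_0(K)$ whose image stabilises~$x_K$.

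Next, choosing a faithful representation $G_0 \hookrightarrow \mathrm{GL}_n$ and using that any finite subgroup of $\mathrm{GL}_n(K)$ preserves some $R$-lattice in $K^n$ (by forming the $\Gamma$-orbit sum of $R^n$), I would produce, after a further extension $R \to R'$, an element $g \in G_0(K')$ with $g \rho(\gamma) g^{-1} \in G_0(R')$ for every $\gamma \in \Gamma$. Setting $\rho' := g \rho g^{-1}$ and $x'_{K'} := g \cdot x_{K'}$, the $K'$-point $x'_{K'}$ is stabilised by the now $R'$-integral $\Gamma$-action $\rho'$. To extract an $R'$-lift $x'$ of $x'_{K'}$ in $\mathrm{Spec}\,A$, I would use that $x'_{K'}$ and $x_{K'}$ project to the same point of the good moduli space $\mathrm{Spec}(A^{G_0})$, with $x$ providing an $R$-integral lift of this projection, and that the $\rho'$-fixed locus in the relevant fibre admits an integral closed-orbit representative after possibly enlarging $R'$. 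Descending through the étale cover and étale neighbourhood then transfers the conclusion back to $\mathcal{X}$.

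The main obstacle I expect is the Bruhat--Tits conjugation step, and in particular arranging that the conjugating element $g$ lies in $G_0(K')$ rather than merely in $\mathrm{GL}_n(K')$, together with the integral-lift argument above; both use the reductivity of $G_0$ and the characteristic-zero hypothesis essentially. The remaining work is bookkeeping of the isomorphism data: the isomorphism $x'_{K'} \simeq x_{K'}$ in $\mathcal{X}(K')$ induced by $g$ must intertwine $\rho'_{K'}$ with $\rho_{K'}$, which then gives the required extension of $\rho_{K'}$ to a $\Gamma$-action on $x'$.
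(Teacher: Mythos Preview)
Your outline shares the skeleton of the paper's proof: reduce to a quotient presentation $[U/\mathrm{GL}_N]$ with $U=\operatorname{Spec} A$ via the local structure theorem, and use the lattice argument (the paper states this as a generalisation of \cite[Lemma~5.12]{alper-halpern-leistner-heinloth-2023-existence}) to conjugate $\rho$ into $\mathrm{GL}_N(R')$. Incidentally, your Bruhat--Tits worry about $g\in G_0(K')$ versus $g\in \mathrm{GL}_n(K')$ evaporates if you take the $\mathrm{GL}_N$-form of the local structure theorem from the outset, which is what the paper does.

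The genuine gap is the integral-lift step. After conjugating you have $\rho'\colon \Gamma\to \mathrm{GL}_N(R')$ and a $K'$-point $x'_{K'}=g\cdot x_{K'}\in U(K')$ stabilised by $\rho'(\Gamma)$, but $U\to U\git\mathrm{GL}_N$ is not proper, so there is no reason $x'_{K'}$ extends to an $R'$-point of~$U$. Your sentence ``the $\rho'$-fixed locus in the relevant fibre admits an integral closed-orbit representative after possibly enlarging~$R'$'' is not an argument; it is a restatement of what must be proved, and neither $\Theta$-reductivity nor the existence of a good moduli space for $[U/\mathrm{GL}_N]$ supplies this directly.

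The paper closes this gap by a GIT compactification, adapting \cite[Proposition~5.11]{alper-halpern-leistner-heinloth-2023-existence}. One embeds $U$ in a scheme $X$ projective over $U\git\mathrm{GL}_N$, and forms $\mathrm{Stab}^{\Gamma}_{\mathrm{GL}_N}(X)\subset X\times \mathrm{GL}_N^{\Gamma}$, which is projective over $U\git\mathrm{GL}_N\times \mathrm{GL}_N^{\Gamma}$. The point of the lattice argument is precisely that $\rho'$ already furnishes an $R'$-point of $\mathrm{GL}_N^{\Gamma}\git\mathrm{GL}_N$, and $x$ furnishes one of $U\git\mathrm{GL}_N$; projectivity over this base then gives an $R'$-point of $\mathrm{Stab}^{\Gamma}_{\mathrm{GL}_N}(X)/\mathrm{GL}_N$ extending the given $K'$-point. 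Finally \cite[Proposition~3.48~(3)]{alper-halpern-leistner-heinloth-2023-existence} is invoked to modify this $R'$-point into the semistable locus $\mathrm{Stab}^{\Gamma}_{\mathrm{GL}_N}(U)/\mathrm{GL}_N$, which is exactly the datum $(x',\rho')$ you need. This compactify-then-semistable-reduce manoeuvre is the missing idea in your proposal.
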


\begin{proof}
    When~$\Gamma$ is a finite cyclic group, this follows from
    \textcite[Theorem~5.3~(1)]{alper-halpern-leistner-heinloth-2023-existence},
    which implies that~$\mathcal{X}$ satisfies the property stated in
    \cite[Proposition~5.2~(1)]{alper-halpern-leistner-heinloth-2023-existence},%
    \footnote{
        Although the property
        is only stated for discrete valuation rings essentially of finite type over~$S$,
        it is proved for all discrete valuation rings there.
    }
    where we use~$X$ as the base.

    The general case also follows from the proof of
    \cite[Theorem~5.3~(1)]{alper-halpern-leistner-heinloth-2023-existence},
    but with the following modifications.

    Firstly, we prove
    \cite[Lemma~5.12]{alper-halpern-leistner-heinloth-2023-existence}
    in this greater generality:
    We show that for $R, K$ as above and an integer $N \geq 0$,
    every group homomorphism $\Gamma \to \mathrm{GL}_N (K)$
    is conjugate to a homomorphism
    $\Gamma \to \mathrm{GL}_N (R) \subset \mathrm{GL}_N (K)$.
    Equivalently, for any $\Gamma$-representation~$V \simeq K^N$,
    there exists a $\Gamma$-invariant $R$-submodule $V_R \simeq R^N \subset V$ spanning~$V$.

    To prove this, take any submodule $V_0 \simeq R^N \subset V$ spanning~$V$,
    and let $V_R = \sum_{\gamma \in \Gamma} \gamma V_0$.
    Then $V_R$ is a $\Gamma$-invariant finitely generated
    torsion-free $R$-submodule of~$V$, hence free.
    The natural map $V_R \otimes_R K \to V$ is an isomorphism:
    It is surjective by construction,
    and injective since $(-) \otimes_R K$ preserves injections
    and $V \otimes_R K \simeq V$.
    This proves the generalized lemma.

    Secondly, the proof of
    \cite[Proposition~5.11]{alper-halpern-leistner-heinloth-2023-existence}
    needs to be modified as follows.
    It starts with an affine scheme $U = \operatorname{Spec} A$
    with a $\mathrm{GL}_N$-action,
    and constructs a scheme $X$ containing~$U$
    and projective over $U \git \mathrm{GL}_N$.
    The stabilizer scheme
    $\mathrm{Stab}_{\mathrm{GL}_N} (X) \subset X \times \mathrm{GL}_N$
    parametrizing pairs $(x, g)$ with $x \in X$ and $g \in \mathrm{GL}_N$
    with $g \cdot x = x$,
    which is projective over
    $U \git \mathrm{GL}_N \times \mathrm{GL}_N$,
    and has a $\mathrm{GL}_N$-action
    by the given action on~$X$ and conjugation on~$\mathrm{GL}_N$.
    For a certain GIT stability condition on
    $\mathrm{Stab}_{\mathrm{GL}_N} (X)$,
    its semistable locus is
    $\mathrm{Stab}_{\mathrm{GL}_N} (U)$,
    and by general results in GIT, the morphism
    $\mathrm{Stab}_{\mathrm{GL}_N} (U) \git \mathrm{GL}_N \to
    U \git \mathrm{GL}_N \times \mathrm{GL}_N \git \mathrm{GL}_N$
    is proper.
    It follows that
    $\mathrm{Stab}_{\mathrm{GL}_N} (U) / \mathrm{GL}_N
    \to U \git \mathrm{GL}_N \times \mathrm{GL}_N \git \mathrm{GL}_N$
    is universally closed,
    so that a morphism
    $\operatorname{Spec} R \to \mathrm{Stab}_{\mathrm{GL}_N} (X) / \mathrm{GL}_N$
    mapping the generic point to the semistable part
    can be modified to a morphism
    $\operatorname{Spec} R' \to \mathrm{Stab}_{\mathrm{GL}_N} (U) / \mathrm{GL}_N$
    entirely inside the semistable part.

    For our purpose, we need to replace $\mathrm{Stab}_{\mathrm{GL}_N} (X)$
    by the scheme
    $\mathrm{Stab}_{\mathrm{GL}_N}^{\Gamma} (X) \subset X \times \mathrm{GL}_N^\Gamma$
    parametrizing pairs $(x, \rho)$ with $x \in X$
    and $\rho \colon \Gamma \to \mathrm{GL}_N$ a group homomorphism
    such that $g (\gamma) \cdot x = x$ for all $\gamma \in \Gamma$;
    here $\mathrm{GL}_N^\Gamma$ denotes the product
    of $|\Gamma|$ copies of~$\mathrm{GL}_N$.
    Then $\mathrm{GL}_N$ acts on $\mathrm{Stab}_{\mathrm{GL}_N}^{\Gamma} (X)$
    by the given action on~$X$ and conjugation on~$\rho$,
    and $\mathrm{Stab}_{\mathrm{GL}_N}^{\Gamma} (X)$ is projective
    over $U \git \mathrm{GL}_N \times \mathrm{GL}_N^\Gamma$.
    The stability condition on~$\mathrm{Stab}_{\mathrm{GL}_N}^{\Gamma} (X)$
    is given by the pullback of a line bundle on~$X$ as in the original proof,
    so the semistable locus is $\mathrm{Stab}_{\mathrm{GL}_N}^{\Gamma} (U)$,
    and we obtain a proper morphism
    $\mathrm{Stab}_{\mathrm{GL}_N}^{\Gamma} (U) \git \mathrm{GL}_N \to
    U \git \mathrm{GL}_N \times \mathrm{GL}_N^\Gamma \git \mathrm{GL}_N$.
    As in the original proof,
    any morphism $\operatorname{Spec} R \to \mathrm{Stab}_{\mathrm{GL}_N}^{\Gamma} (X) / \mathrm{GL}_N$
    can be modified to a morphism
    $\operatorname{Spec} R' \to \mathrm{Stab}_{\mathrm{GL}_N}^{\Gamma} (U) / \mathrm{GL}_N$.

    The proof of \cite[Theorem~5.3~(1)]{alper-halpern-leistner-heinloth-2023-existence}
    can now be followed with the above modifications.
\end{proof}

\section{Orthosymplectic complexes}

In this section, we apply \cref{thm-proper}
to construct proper moduli spaces for orthosymplectic complexes.
We first recall the notion of orthosymplectic complexes from
\textcite[\S 6.2]{bu-osp-dt}.

\begin{para}[Orthosymplectic complexes]
    Let~$X$ be a connected smooth projective $\mathbb{C}$-variety,
    and we fix the data
    $(I, L, s, \varepsilon)$, where
    $I \colon X \simto X$ is an involution,
    $L \to X$ is a line bundle,
    $s$ is an integer, and
    $\varepsilon \colon L \simto I^* (L)$ is an isomorphism
    such that $I^* (\varepsilon) \circ \varepsilon = \mathrm{id}_L$.

    This data defines a \emph{self-dual structure}
    on the dg-category $\mathsf{Perf} (X)$
    of perfect complexes on~$X$,
    that is a contravariant involution
    \begin{equation*}
        \mathbb{D} = \mathbb{R} \calHom (I^* (-), L) [s]
        \colon \mathsf{Perf} (X) \longsimto \mathsf{Perf} (X)^\mathrm{op} \ ,
    \end{equation*}
    with the identification
    $\mathbb{D}^2 \simeq \mathrm{id}$ induced by the isomorphism~$\varepsilon$.

    A \emph{self-dual complex} is defined as a fixed point
    of the $\mathbb{Z} / 2$-action on the underlying groupoid $\mathsf{Perf} (X)^\simeq$.
    Explicitly, it is the data of a complex~$E \in \mathsf{Perf} (X)$,
    an isomorphism $\varphi \colon E \simto \mathbb{D} (E)$,
    an equivalence $\varphi \simto \mathbb{D} (\varphi)$,
    and higher coherence data.
    If~$E$ lies in a heart of some t-structure on $\mathsf{Perf} (X)$,
    then the data $(E, \varphi)$ is sufficient,
    with no need for higher coherence.

    For example, a natural choice is to take
    $I = \mathrm{id}_X$, $L = \mathcal{O}_X$, $s = 0$, and
    $\varepsilon = \mathrm{id}_{\mathcal{O}_X}$
    (resp.~$-\mathrm{id}_{\mathcal{O}_X}$).
    In this case, orthogonal (resp.~symplectic) vector bundles on~$X$
    are self-dual complexes.
    For this reason, we call the self-dual complexes
    \emph{orthogonal} (resp.~\emph{symplectic}) \emph{complexes}
    in this case.
\end{para}

\begin{para}[Stability conditions]
    Let~$(Z, \mathcal{P})$ be a Bridgeland stability condition on~$X$,
    as in \cite{bridgeland-2007-stability},
    where $Z \colon K (X) \to \mathbb{C}$ is the central charge,
    where we set $K (X) \subset \mathrm{H}^{2 \bullet} (X; \mathbb{Q})$
    to be the subset consisting of Chern characters of perfect complexes,
    and $\mathcal{P} = (\mathcal{P} (t))_{t \in \mathbb{R}}$
    is a slicing of $\mathsf{Perf} (X)$.

    The \emph{dual stability condition}~$(Z^\vee, \mathcal{P}^\vee)$
    of~$(Z, \mathcal{P})$ is defined by
    $Z^\vee (\alpha) = \overline{Z (\mathbb{D} \alpha)}$
    for all $\alpha \in K (X)$,
    where the bar denotes complex conjugation,
    and $\mathcal{P}^\vee (t) = \mathbb{D} (\mathcal{P} (-t))$
    for all $t \in \mathbb{R}$.

    For the stability condition to be applied to self-dual complexes,
    we require it to be \emph{self-dual},
    that is, we require that
    $(Z, \mathcal{P}) = (Z^\vee, \mathcal{P}^\vee)$.

    Furthermore, to ensure good behaviour of moduli stacks,
    we require the stability condition $(Z, \mathcal{P})$
    to have \emph{rational central charge},
    that is, $Z (K (X)) \subset \mathbb{Q} + \mathrm{i} \mathbb{Q}$,
    and to satisfy the \emph{generic flatness} and \emph{boundedness} properties,
    as in \textcite[\S 4]{piyaratne-toda-2019} or
    \textcite[Definitions~6.2.4 and~6.5.2]{halpern-leistner-instability}.
    See \cite[Example~6.2.5]{bu-osp-dt}
    for examples of such stability conditions
    when~$X$ is either a curve, a surface,
    or a threefold satisfying the Bogomolov--Gieseker inequality of
    \textcite{bayer-macri-toda-2014}.
\end{para}

\begin{para}[Moduli stacks]
    Given a self-dual stability condition $\tau = (Z, \mathcal{P})$ as above,
    the dual functor~$\mathbb{D}$ preserves
    semistable objects of slope~$0$,
    and the moduli stack $\mathcal{X}^{\mathrm{ss}} (0; \tau)$
    of semistable objects of slope~$0$ (including the zero object)
    admits an induced $\mathbb{Z} / 2$-action,
    similar to the one considered in \cref{eg-bgl}.
    The fixed locus
    \begin{equation*}
        \mathcal{X}^{\mathrm{sd,ss}} (\tau) =
        \mathcal{X}^{\mathrm{ss}} (0; \tau)^{\mathbb{Z} / 2}
    \end{equation*}
    is the moduli of semistable self-dual complexes.

    By \textcite[Lemma~7.20, Examples~7.26 and~7.29]{alper-halpern-leistner-heinloth-2023-existence},
    the stack $\mathcal{X}^{\mathrm{ss}} (0; \tau)$ has affine diagonal,
    and for every Chern character $\alpha \in K (X)$ with $Z (\alpha) \in \mathbb{R}_{> 0}$,
    the open and closed substack
    $\mathcal{X}^{\mathrm{ss}}_\alpha (0; \tau) \subset \mathcal{X}^{\mathrm{ss}} (0; \tau)$
    parametrizing semistable objects with Chern character~$\alpha$ and slope~$0$
    has a proper good moduli space.
    Therefore, applying \cref{thm-proper}, we obtain the following:
\end{para}

\begin{theorem}
    \label{thm-osp}
    Let~$X$ be a connected smooth projective $\mathbb{C}$-variety,
    and fix the data~$(I, L, s, \varepsilon)$ as above.
    Let~$\tau$ be a self-dual Bridgeland stability condition on~$X$
    with rational central charge
    and satisfying the generic flatness and boundedness properties.

    Then for every Chern character $\alpha \in K (X)$
    with $\alpha = \mathbb{D} (\alpha)$ and $Z (\alpha) \in \mathbb{R}_{> 0}$,
    the open and closed substack
    \begin{equation*}
        \mathcal{X}^{\mathrm{sd,ss}}_\alpha (\tau) \subset \mathcal{X}^{\mathrm{sd,ss}} (\tau)
    \end{equation*}
    parametrizing semistable self-dual complexes with Chern character~$\alpha$
    has a proper good moduli space.
    In particular, every connected component of
    $\mathcal{X}^{\mathrm{sd,ss}} (\tau)$
    has a proper good moduli space.
    \qed
\end{theorem}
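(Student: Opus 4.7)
The plan is to obtain this theorem as a direct application of \cref{thm-proper} to the moduli stack $\mathcal{X}^{\mathrm{ss}}_\alpha(0; \tau)$, equipped with the $\mathbb{Z}/2$-action induced by the duality functor~$\mathbb{D}$.

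First I would record that, because $\alpha = \mathbb{D}(\alpha)$, the $\mathbb{Z}/2$-action on $\mathcal{X}^{\mathrm{ss}}(0; \tau)$ preserves the open and closed substack $\mathcal{X}^{\mathrm{ss}}_\alpha(0; \tau)$, and directly from the definition of $\mathcal{X}^{\mathrm{sd,ss}}(\tau)$ as a $\mathbb{Z}/2$-fixed locus we have
\begin{equation*}
\mathcal{X}^{\mathrm{sd,ss}}_\alpha(\tau) \simeq \mathcal{X}^{\mathrm{ss}}_\alpha(0; \tau)^{\mathbb{Z}/2}.
\end{equation*}
The ambient stack $\mathcal{X}^{\mathrm{ss}}_\alpha(0; \tau)$ is of finite type with affine diagonal over $\operatorname{Spec} \mathbb{C}$ and admits a proper good moduli space, by the results of Alper--Halpern-Leistner--Heinloth recalled immediately before the theorem. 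Applying the \enquote{in particular} clause of \cref{thm-proper} with $\mathcal{X} = \mathcal{X}^{\mathrm{ss}}_\alpha(0; \tau)$ and $\Gamma = \mathbb{Z}/2$ then produces the desired proper good moduli space for $\mathcal{X}^{\mathrm{sd,ss}}_\alpha(\tau)$.

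For the final \enquote{in particular} statement, I would argue that every connected component of $\mathcal{X}^{\mathrm{sd,ss}}(\tau)$ has a locally constant Chern character $\alpha$, automatically satisfying $\alpha = \mathbb{D}(\alpha)$ because its points parameterise complexes isomorphic to their own duals. Self-duality of $(Z, \mathcal{P})$ forces $Z(\alpha) = \overline{Z(\alpha)} \in \mathbb{R}$, and the slope-zero condition yields $Z(\alpha) \in \mathbb{R}_{>0}$ unless the component is the single point parameterising the zero object, for which the claim is trivial. Thus each connected component is contained in some $\mathcal{X}^{\mathrm{sd,ss}}_\alpha(\tau)$ satisfying the hypotheses of the first part, and inherits a proper good moduli space as an open and closed part of it.

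I do not foresee any serious obstacle here: the substantive work is done by \cref{thm-proper} and by the input of Alper--Halpern-Leistner--Heinloth supplying the proper good moduli space of $\mathcal{X}^{\mathrm{ss}}_\alpha(0; \tau)$ with affine diagonal. The only point requiring mild care is the Chern character bookkeeping in the connected components statement, combining $\mathbb{D}$-invariance of the Chern character with the self-duality of $(Z, \mathcal{P})$ to ensure the hypotheses of the first part are met.
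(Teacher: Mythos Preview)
Your proposal is correct and matches the paper's approach exactly: the paper also deduces the theorem directly from \cref{thm-proper} applied to $\mathcal{X}^{\mathrm{ss}}_\alpha(0;\tau)$ with its $\mathbb{Z}/2$-action, after invoking the Alper--Halpern-Leistner--Heinloth input on affine diagonal and proper good moduli spaces. Your extra bookkeeping for the connected-components statement is in fact slightly more explicit than the paper, which leaves it implicit.
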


\begin{remark}
    As in \cite[Example~7.29]{alper-halpern-leistner-heinloth-2023-existence},
    the theorem also holds for any self-dual Bridgeland stability condition~$\tau'$
    that is in the same connected component in the space of
    (not necessarily self-dual) stability conditions
    as one that satisfies the above properties,
    because for each fixed class $\alpha \in K (X)$,
    one can always find a stability~$\tau$ satisfying the above properties
    that is close enough to~$\tau'$ and defines the same semistable locus for~$\alpha$.
\end{remark}

\preparebibliography
\printbibliography

\authorinforule

\authorinfo{Chenjing Bu}
    {bucj@mailbox.org}
    {Mathematical Institute, University of Oxford, Oxford OX2 6GG, United Kingdom}

\end{document}